\newtheorem{theorem}{Theorem}
\newcommand{\BH}{\mathrm{B}(\mathcal{H})}
\newcommand{\dfn}[1]{{\bf #1}}
\title[The Quantum Cross]{The spectral constant for the quantum cross and asymptotically sharp bounds for annuli}
\author{J. E. Pascoe}
\date{}
\subjclass{	47A25}
\keywords{$K$-spectral set, spectral constant, quantum cross, quantum annulus, dilation theory}
\begin{document}

\begin{abstract}
The quantum annulus of type $r$ is the class of invertible operators with singular values in $(1/r,r).$
Given an analytic function on the classical annulus of type $r,$ we may evaluate it on operators in the quantum annulus via the holomorphic functional calculus.
The spectral constant gives the maximum ratio betweeen the supremum over the norm of evalutions at operators in the quantum annulus to
the supremum over classical evaluations. We show that the limit of the spectral constant as $r$ goes to infinity is $2.$
Via the correspondence between annuli and hyperbolae, our study degenerates the problem to one on the quantum cross, pairs of contractions with product zero, where the spectral constant is exactly $2.$

The essential technique is to rationally dilate $Z$ to $\hat{Z}$ which has $U =(\hat{Z}+(\hat{Z}^{-1})^*)/(r+1/r)$ unitary
and estimate $Uf(\hat{Z})U^*$ directly.
\end{abstract}
\maketitle
\section{Introduction}
Let $r>1$
Let 
    $\mathbb{A}_r = \{z\in \mathbb{C}| |z|, |z^{-1}|<r\}$
denote the \dfn{annulus}.
We let $Q\mathbb{A}_r$ denote the Hilbert space operators $Z$ such that $\|Z\|,\|Z^{-1}\|<r,$ called the \dfn{quantum annulus}.

Let $f: \mathbb{A}_r \rightarrow \BH$ be analytic. We denote the infinity norm of $f$ by $\|f\|_{\mathbb{A}_r}.$
We denote the class of functions with finite norm by $H^\infty(\mathbb{A}_r).$
We define $f(Z)$ via evaluation of the Laurent series for $f.$
We denote the Agler norm of $f$ by $\|f\|_{Q\mathbb{A}_r}$ by the supremum of $\|f(Z)\|$ over $Z \in Q\mathbb{A}_r.$
We denote the class of functions with finite norm by $H^\infty(Q\mathbb{A}_r).$

The inclusion map from $\iota_r: H^\infty(\mathbb{A}_r) \rightarrow H^\infty(Q\mathbb{A}_r)$ is well-defined and bounded by a result of Shields \cite{shields}.
We call $K(r) = \|\iota_r\|$ the \dfn{spectral constant} of the quantum annulus of type $r.$
The best known lower bound is due to Tsikalas \cite{tsikalas} and upper bound due to Crouzeix and Greenbaum \cite{crouzeixgreenbaum}, specifically
    $2 \leq K(r)\leq 1+\sqrt{2}.$
Whether $K(r)$ is constant remains unclear. However it is known that \cite{pascnote},
    $$\liminf_{r\rightarrow \infty} K(r) = \inf K(r),$$
    $$\limsup_{r\rightarrow 1} K(r) = \sup K(r).$$

We establish that:
    $$K(r) \leq 2+O\left(\frac{1}{r^2}\right) \textrm{ as } r \rightarrow \infty$$
and thus the Tsikalas bound is asymptotically tight.

\subsection{Related domains}
Define the \dfn{conservative hyperbola} $\mathbb{H}_r$ to be the points $(z,w) \in \mathbb{D}^2$ such that $zw=1/r^2.$
Define the \dfn{quantum conservative hyperbola} $Q\mathbb{H}_r$ to be the pairs of Hilbert space operators $Z, W$ such that
$$ZW = 1/r^2, \|Z\|,\|W\|<1.$$ Note the map $Z \rightarrow (Z/r, Z^{-1}/r)$ from annuli to hyperbolas is bijective.

We call $\mathbb{H}_\infty= \{(z,w)\in \mathbb{D}^2| zw=0\}$ the \dfn{cross} and $Q\mathbb{H}_\infty$ the set of pairs $\|Z\|,\|W\|<1$ and $ZW=WZ=0,$ called the
\dfn{quantum cross}.

By the virtue that both conservative hyperbolas and the cross are subdomains of the bidisk, we have that the quantum norm of a function $f$ is exactly the minimum norm
$\hat{f}\in H^{\infty}(\mathbb{D}^2)$ whose restriction to the subdomain is $f$ via the Pick interpolation theory \cite{ampi}. That is,
our results give the following bound on the norm of a function off these varieties:
    $$\inf_{\hat{f}|_{H_r}=f \neq 0} \frac{\|\hat{f}\|_{\mathbb{D}^2}}{\|f\|_{\mathbb{H}_r}} \leq 2+O\left(\frac{1}{r^2}\right) \textrm{ as } r \rightarrow \infty.$$
Note the bound is sharp in the case of the cross. We denote the \dfn{spectral constant of the quantum cross} by $K(\infty).$

We will show that the spectral constant for the quantum cross is $2,$ the first $K(r)$ in our family under consideration known exactly.

Note taking $f(z,w)=z+w$ and 
$$Z=W=\begin{pmatrix}
0 & 1 \\ 0 & 0
\end{pmatrix}$$
gives that the lower bound is indeed $2.$

\section{Miniature dilations on quantum hyperbolae}

We now prove a perfect minidilation theorem for quantum hyperbolae. For large dilation theorems on quantum annuli, see \cite{pascmc}. For the minidilation theory on the noncommutative row ball, see \cite{apj}.
\begin{theorem}
    If $(Z, W) \in Q\mathbb{H}_r,$ there there exist $(\hat{Z},\hat{W})$ such that
    \begin{enumerate}
        \item $\|\hat{Z}\|,\|\hat{W}\|=1,$ 
        \item $\hat{Z}\hat{W}=\hat{W}\hat{Z}=1/r^2,$
        \item $\sigma(\hat{Z})\subseteq \sigma(Z)\cup\sigma(W^*), \sigma(\hat{W})\subseteq \sigma(W)\cup\sigma(Z^*),$
        \item $U=\frac{r}{r+1/r}\left(\hat{Z}+\hat{W}^*\right)$ is unitary,
        \item for any holomorphic $f:\mathbb{H}_r\rightarrow \mathbb{C},$
        $$\|f(Z,W)\|\leq \|f(\hat{Z},\hat{W})\|.$$
    \end{enumerate}
\end{theorem}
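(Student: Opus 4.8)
The plan is to realize $\hat Z$ as a $2\times 2$ block upper triangular operator on $\mathcal{H}\oplus\mathcal{H}$,
$$\hat Z=\begin{pmatrix}Z & B\\ 0 & W^*\end{pmatrix},\qquad \hat W:=\hat Z^{-1}/r^2,$$
for a single auxiliary operator $B\in\BH$ chosen below, and to read the five conclusions off this shape. Block upper triangularity with upper-left corner $Z$ will make the holomorphic functional calculus block upper triangular with upper-left corner ``$f$ at $Z$'': every holomorphic $f$ on $\mathbb{H}_r$ has the form $f(z,w)=g(z)$ with $g$ holomorphic on the annulus $\{z:1/r^2<|z|<1\}$, so $f(Z,W)=g(Z)$ and---once we check $\sigma(\hat Z)$ lies in that annulus---$f(\hat Z,\hat W)=g(\hat Z)$; compressing $g(\hat Z)$ to the first summand then gives (5). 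Taking the lower-right corner to be $W^*$ forces $\sigma(\hat Z)\subseteq\sigma(Z)\cup\sigma(W^*)$, and since $\hat Z^{-1}$, hence $\hat W$, is then block upper triangular with corners $W$ and $Z^*$, this delivers (3); while (2) is automatic since $\hat W=\hat Z^{-1}/r^2$ with $\hat Z$ invertible. Finally, for (1) and (4) it suffices to arrange $\sigma(\hat Z^*\hat Z)\subseteq\{1,r^{-4}\}$: with $a=\frac{r^2}{r^2+1}$, $b=\frac{1}{r^2+1}$ one has $U=a\hat Z+b(\hat Z^{-1})^*$ and
$$U^*U=a^2\hat Z^*\hat Z+2abI+b^2(\hat Z^*\hat Z)^{-1},\qquad UU^*=a^2\hat Z\hat Z^*+2abI+b^2(\hat Z\hat Z^*)^{-1},$$
and the function $t\mapsto a^2t+2ab+b^2/t$ equals $1$ at $t=1$ and at $t=r^{-4}$, so both products equal $I$ once $\sigma(\hat Z^*\hat Z)=\sigma(\hat Z\hat Z^*)\subseteq\{1,r^{-4}\}$, the equality of spectra following from invertibility.

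Thus the construction comes down to choosing $B$ so that $\hat Z^*\hat Z$ has spectrum in $\{1,r^{-4}\}$. Set $X=(Z^*Z)^{1/2}$. Since $(Z,W)\in Q\mathbb{H}_r$ with $W=Z^{-1}/r^2$ we have $\|Z\|\le 1$ and $\|Z^{-1}\|\le r^2$, that is $r^{-2}I\le X\le I$, so
$$C:=\bigl[(I-X^2)(X^2-r^{-4}I)\bigr]^{1/2}$$
is a well-defined positive operator commuting with $X$. I would take $B=(Z^{-1})^*C$ (equivalently $B=r^2W^*C$), the choice making $Z^*B=B^*Z=C$. A block computation, using $WW^*=r^{-4}(Z^*Z)^{-1}$ and $C^2=(I-X^2)(X^2-r^{-4}I)$, then gives
$$\hat Z^*\hat Z=\begin{pmatrix}X^2 & C\\ C & (1+r^{-4})I-X^2\end{pmatrix},$$
whose four entries all lie in the commutative $C^*$-algebra generated by $X$; expanding $(\hat Z^*\hat Z-I)(\hat Z^*\hat Z-r^{-4}I)$ block by block shows it vanishes---equivalently, over $C^*(X)$ this matrix has trace $(1+r^{-4})I$ and determinant $r^{-4}I$, hence characteristic polynomial $(t-1)(t-r^{-4})$. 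So $\sigma(\hat Z^*\hat Z)\subseteq\{1,r^{-4}\}$, and since $\hat Z^*\hat Z$ can be neither $I$ nor $r^{-4}I$ (comparing the two diagonal blocks rules this out), in fact $\sigma(\hat Z^*\hat Z)=\{1,r^{-4}\}$, giving $\|\hat Z\|=1$ and $\|\hat W\|=r^{-2}\|\hat Z^{-1}\|=1$, which is (1).

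With $B$ in hand the remaining steps are mechanical. (2) is the definition $\hat W=\hat Z^{-1}/r^2$ with $\hat Z$ invertible. (3) comes from reading the diagonal corners of the upper triangular $\hat Z$ and $\hat Z^{-1}$, together with the observation that $\|Z\|,\|W\|<1$ and invertibility put $\sigma(Z)$ and $\sigma(W^*)$ inside $\{z:1/r^2<|z|<1\}$, so that $g(\hat Z)$ is genuinely defined by the holomorphic functional calculus. (4) is the pair of displayed identities for $U^*U$ and $UU^*$. (1) is as just noted. And (5) holds because for $\zeta$ outside $\sigma(Z)\cup\sigma(W^*)$ the resolvent $(\zeta-\hat Z)^{-1}$ is block upper triangular with upper-left corner $(\zeta-Z)^{-1}$, hence so is $g(\hat Z)$ with upper-left corner $g(Z)$, and compression to $\mathcal{H}$ cannot increase the norm.

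The one genuinely creative step is the choice of $B$: realizing that one should force $Z^*B$ to be the positive square root $C$, a function of $X$, since this is exactly what makes $\hat Z^*\hat Z$ a $2\times 2$ operator matrix with commuting entries, whose spectrum can then be read off from the scalar trace and determinant. The positivity of $C$ is the point where the hyperbola hypothesis is indispensable: $ZW=1/r^2$ is what forces $X\ge r^{-2}I$ and hence $X^2-r^{-4}I\ge 0$; for a generic pair of contractions this inner factor may fail to be positive and the construction collapses. Everything after the choice of $B$ is a finite sequence of block-matrix identities.
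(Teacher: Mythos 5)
Your proposal is correct and is in substance identical to the paper's construction: writing $Z=UA$ in polar form, the paper's corner entry $H=U\sqrt{\bigl(\tfrac{r+1/r}{r}\bigr)^2-(A+r^{-2}A^{-1})^2}=UA^{-1}\bigl[(I-A^2)(A^2-r^{-4}I)\bigr]^{1/2}$ coincides with your $B=(Z^{-1})^*C$, and your $\hat W=\hat Z^{-1}/r^2$ is the paper's lower-triangular $\hat W$. The paper states the dilation without verification, so your computation of $\hat Z^*\hat Z$, the trace--determinant argument over $C^*(X)$, and the check that $t\mapsto a^2t+2ab+b^2/t$ equals $1$ at $t\in\{1,r^{-4}\}$ supply exactly the omitted details.
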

\begin{proof}
    Write a polar decomposition
        $Z= UA.$
    Note, $W=\hat{A}U^*$
    Take $H=U\sqrt{\left(\frac{r+1/r}{r}\right)^2-(A+\hat{A})^2}.$
    Let
        $$\hat{Z}= \begin{pmatrix}
            Z & H\\ 0 & W^*
        \end{pmatrix},
        \hat{W}= \begin{pmatrix}
            W & -H^* \\ 0 & Z^*
        \end{pmatrix}.$$
\end{proof}

\section{The estimate}
We now prove the main result
\begin{theorem}
$$K(r) \leq 2+O\left(\frac{1}{r^2}\right) \textrm{ as } r \rightarrow \infty$$
\end{theorem}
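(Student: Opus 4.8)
The plan is to exploit the miniature dilation theorem together with the correspondence $Z\mapsto(Z/r,Z^{-1}/r)$ between the quantum annulus $Q\mathbb{A}_r$ and the quantum hyperbola $Q\mathbb{H}_r$. Given $Z\in Q\mathbb{A}_r$ and $f\in H^\infty(\mathbb{A}_r)$, I will view $f$ as a function $g$ on $\mathbb{H}_r$ via $g(z,w)=f(rz)$, pass to the dilation $(\hat{Z},\hat{W})$ provided by the first theorem, and estimate $\|g(\hat{Z},\hat{W})\|=\|f(r\hat{Z})\|$ directly. By property (5) of the dilation this dominates $\|f(Z)\|$, so it suffices to bound $\|f(r\hat{Z})\|$ by $\big(2+O(1/r^2)\big)\|f\|_{\mathbb{A}_r}$. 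The payoff of the dilation is property (4): $U=\frac{r}{r+1/r}(\hat{Z}+\hat{W}^*)$ is unitary, and since $\hat{Z}\hat{W}=1/r^2$ we have $\hat{W}^*=(\hat{Z}^{-1})^*/r^2$ — wait, more precisely $\hat{W}=\hat{Z}^{-1}/r^2$ on the range, so $U$ is (up to the scalar $\tfrac{r}{r+1/r}=\tfrac{r^2}{r^2+1}$) the operator $\hat{Z}+(\hat{Z}^{-1})^*/r^2$. Thus $\hat{Z}$ is, modulo a controlled perturbation of size $O(1/r^2)$ in a suitable sense, the product of a unitary with an operator close to a contraction.

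The key computation is the following. Conjugating by $U$ does not change norms, so I will estimate $\|Uf(r\hat{Z})U^*\|$. Write $f(rz)=\sum_{n\in\mathbb{Z}}c_n z^n$ on the annulus; the idea is to split $f$ into its "analytic part" $f_+$ (nonnegative Laurent coefficients) and "co-analytic part" $f_-$ (negative coefficients), so that $f_+(r\hat Z)$ is a function of $\hat Z$ and $f_-(r\hat Z)$ is a function of $\hat Z^{-1}$, hence of $\hat W$. On the annulus $\|f_\pm\|_{\mathbb{A}_r}\leq\|f\|_{\mathbb{A}_r}$ by the standard averaging (the projection onto $H^\infty$ of the annulus / the outer part), so it suffices to bound each piece by $(1+O(1/r^2))\|f\|$. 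For $f_+$: since $\hat Z$ has norm $1$ but spectrum inside the closed annulus, and $U$ relates $\hat Z$ to a contraction up to $O(1/r^2)$, one gets $\|f_+(r\hat Z)\|\leq(1+O(1/r^2))\sup_{|z|\le r}|f_+(z)|$; combined with the analogous bound for $f_-$ via $\hat W$, and the triangle inequality, this yields the constant $2+O(1/r^2)$. The cleanest way to run this is: $r\hat Z=(r^2+1)\,U\,(\text{something contractive})-(\hat Z^{-1})^*/r$, i.e. isolate $r\hat Z+(\hat Z^{-*})/r=(r+1/r)U$, so that $\hat Z=U - (\hat Z^{-1})^*/(r^2 U)$-type relations let me run a von Neumann estimate on $U$ with an error term that is $O(1/r^2)$ because $\|(\hat Z^{-1})^*\|=\|\hat W\|\cdot r^2\cdot\frac1{r^2}$ stays bounded while the prefactor is $1/r^2$.

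I expect the main obstacle to be making the "von Neumann estimate with $O(1/r^2)$ error" rigorous: $\hat Z$ itself is not a contraction (it has norm exactly $1$ and its spectrum can touch the outer boundary circle $|z|=r$ after scaling), so one cannot apply von Neumann's inequality to $\hat Z$ directly. The fix is to apply the spectral estimate to the \emph{unitary} $U$ — for which $\|h(U)\|=\sup_{|\zeta|=1}|h(\zeta)|$ holds exactly — and then express $f_\pm(r\hat Z)$ as $h_\pm(U)$ plus an error. Producing this representation requires inverting the relation $U=\frac{r}{r+1/r}(\hat Z+\hat W^*)$ to write $\hat Z$ (on the appropriate reducing subspace where it is normal, using property (3) on the spectrum) as an explicit function of $U$ with a power-series remainder; controlling that remainder uniformly on $\{|\zeta|=1\}$ and checking its norm is $O(1/r^2)\|f\|_{\mathbb{A}_r}$ is the delicate point, and is where the hypothesis $r\to\infty$ is essential. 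Once that is in hand, summing the two pieces gives $K(r)\le 2+O(1/r^2)$.
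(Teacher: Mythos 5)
There is a genuine gap, and it sits exactly at the point you flag as ``the delicate point.'' First, the assertion that $\|f_\pm\|_{\mathbb{A}_r}\leq\|f\|_{\mathbb{A}_r}$ ``by the standard averaging'' is false: the splitting of a Laurent series into its nonnegative and negative parts is a Riesz-type projection, which is not contractive (indeed not even bounded with constant $1$) in the sup norm. More to the point, for the von Neumann step you actually need $f_+$ controlled on all of $\{|z|<r\}$ and $f_-$ on all of $\{|z|>1/r\}$, and these two bounds cannot both be $(1+O(1/r^2))\|f\|_{\mathbb{A}_r}$: one piece (say the one containing the constant term) satisfies $\leq\frac{r^2}{r^2-1}\|f\|$ by a Cauchy estimate, but the other can then be as large as $(2-o(1))\|f\|$ on its domain --- take $f(\zeta)=g(1/(r\zeta))$ with $g$ a Möbius automorphism of the disk, so $f_+\equiv g(0)$ and $\sup|f_-|=\sup|g-g(0)|\to 2$. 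Consequently the plan ``bound each piece by $(1+O(1/r^2))\|f\|$ and add'' yields $3+o(1)$, not $2+o(1)$; this is precisely why the asymptotic constant $2$ is nontrivial. Your proposed rescue --- representing $f_\pm(r\hat Z)$ as $h_\pm(U)$ plus an $O(1/r^2)$ error by ``inverting'' $U=\frac{r}{r+1/r}(\hat Z+\hat W^*)$ --- does not go through: $U$ is a sum of two non-commuting operators, $\hat Z$ is a strictly upper-triangular $2\times 2$ block perturbation of $Z\oplus W^*$ and is not normal, and property (3) of the dilation theorem constrains only the spectrum, not normality, so there is no reducing subspace on which $\hat Z$ becomes a function of $U$. (Your worry that von Neumann fails for $\|\hat Z\|=1$ is, by contrast, unfounded; norm-one contractions are fine. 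That is not where the difficulty lies.)

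The paper's mechanism for getting $2$ rather than $3$ is different and purely algebraic: it does not try to express $f(\hat Z)$ through the functional calculus of $U$. Writing $f=f^+(z)z+a_0+wf^-(w)$ on the hyperbola, one conjugates by $U$ and uses the identity $\frac{r+1/r}{r}U=\hat Z+\hat W^*$ on the \emph{right} for the analytic half and on the \emph{left} for the co-analytic half, obtaining
$$\tfrac{r+1/r}{r}Uf(\hat Z,\hat W)U^*=U\bigl(f^+(\hat Z)\hat Z+a_0\bigr)\hat Z^*+\hat W^*\bigl(\hat Wf^-(\hat W)+a_0\bigr)U^*+\tfrac{1}{r^2}\bigl[Uf^+(\hat Z)+f^-(\hat W)U^*\bigr].$$
Each of the two main terms is bounded by $\frac{r^2}{r^2-1}\|f\|_{\mathbb{H}_r}$ via von Neumann applied to the contraction $\hat Z$ (respectively $\hat W$) on the side where the corresponding half of $f$ is well controlled, with $U$, $\hat Z^*$, $\hat W^*$ contributing only norm-one factors; the dangerous cross terms are exactly the ones carrying the factor $\hat Z\hat W=1/r^2$. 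This identity --- together with the observation that the constant term splits as $Ua_0U^*=Ua_0\hat Z^*+\hat W^*a_0U^*$ --- is the missing idea; without it your outline cannot reach the constant $2$.
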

\begin{proof}
Write $f:\mathbb{H}_r\rightarrow \mathbb{C}$ analytic as
$f^+(z)z + a_0 +wf^-(w).$ Without loss of generality, we can take $U$ unitary as above.
Now,
\begin{align*}
\frac{r+1/r}{r}Uf(Z,W)U^*&=\frac{r+1/r}{r}U(f^+(Z)Z + a_0 +Wf^-(W))U^*\\& = U(f^+(Z)Z + a_0)Z^* + W^*(Wf^-(W)+a_0)U^* \\&+ Uf^+(Z)ZW+ ZWf^-(W)U^* \\ &
= U(f^+(Z)Z + a_0)Z^* + W^*(Wf^-(W)+a_0)U^* \\&+ \frac{1}{r^2}[Uf^+(Z)+f^-(W)U^*].
\end{align*}
(The small miracle that $Ua_0U^* = Ua_0Z^* + W^*a_0U^*,$ as $ZW=W^*Z^*$ handles the constant term. For the quantum cross, any quantity involving $r$ should be understood as the corresponding limit as $r \rightarrow \infty$.)
Note, by Cauchy's estimate,
$$\|f^+z + a_0\|_\mathbb{D},\|wf^- + a_0\|_\mathbb{D}\leq \frac{r^2}{r^2-1}\|f\|_{\mathbb{H}_r}$$
$$\|f^+\|_\mathbb{D},\|f^-\|_\mathbb{D}\leq \frac{2r^2-1}{r^2-1}\|f\|_{\mathbb{H}_r}.$$
So by von Neumann's inequality,
$$\|f(Z,W)\|\leq 2\frac{r^2}{r^2+1}\left[ \frac{r^2}{1-r^2} + \frac{2r^2-1}{r^4-r^2}\right]\|f\|_{\mathbb{H}_r}.$$
Thus,
$$\|f(Z,W)\|\leq 2\left[ 1+ \frac{2r^2}{r^4-1}\right]\|f\|_{\mathbb{H}_r}.$$
\end{proof}
\subsection{Some remarks}
Hartz noted to the author that Shields has somewhat sharper bounds for the norms of the decomposition into two functions on the disk,
but as we are merely concerned with asymptotics and the algebra is slightly more complicated.

\bibliographystyle{plain}
\bibliography{references}

\end{document}